\newtheorem{theorem}{Theorem}[section]
\theoremstyle{definition}
\theoremstyle{remark}
\begin{document}

\title{Simplification of complexes for persistent homology computations.}

% First Author
\author{Pawe{\l} D{\l}otko}              
% First Author's email address.  Must come before \address.
\email{pawel.dlotko@uj.edu.pl}
% First author's postal address.  Do not use \\ to separate lines,
% just use ordinary line breaks like below.  It will appear on one
% line in the article, but will be used exactly as typed below to send
% a complimentary copy of the journal, so ensure that it is a complete
% postal address with line breaks.
% We recommend using the most current address possible.  
% If you want the journal copy sent to a different address than
% the one below, please let Dan Christensen <jdc@uwo.ca> know.
\address{Department of Mathematics, University of Pennsylvania, Philadelphia, PA 19104-6395, USA and Institute of Computer Science,
         Jagiellonian University,
         Lojasiewicza 6,
         Krakow, Poland}
% If needed, use a \thanks command, but not inside the \author command.
\thanks{The first author was supported by DARPA grant FA9550-12-1-0416. The second author is supported by Foundation for Polish Science IPP Programme ''Geometry and Topology in Physical Models''.}

% Second Author.
\author{Hubert Wagner}
% Second Author's email address.  Must come before \address.
\email{hubert.wagner@uj.edu.pl}
% Second author's postal address.  Please read the instructions
% given above.  This one gives an example of an American address.
\address{Institute of Computer Science,
         Jagiellonian University,
         Lojasiewicza 6,
         Krakow, Poland}

% AMS 2000 Mathematics Subject Classification.  List one or several,
% separated by commas, ending in a period.
%\classification{55-04, 55U05, 57M15, 68R05.}

% Keywords of the article, usually singular, no leading caps.  
% Separated by commas, ending with period.
\keywords{homology, persistence, reduction algorithms.}

% Abstract comes before maketitle
\begin{abstract}
In this paper we focus on preprocessing for persistent homology computations. We adapt some techniques which were successfully used for standard homology computations. The main idea is to reduce the complex prior to generating its boundary matrix, which is costly to store and process. We discuss the following reduction methods: elementary collapses, coreductions (as defined by Mrozek and Batko) and acyclic subspace method (introduced by Mrozek, Pilarczyk and \.Zelazna). 
\end{abstract}

% Leave these items like this, and the journal will fill them in.

\maketitle

\section{Introduction}
Persistent homology starts being applied to a wide range of different practical problems, ranging from sensor networks to root architecture analysis. Performance, however, still tends to be a problem. In the following paper efficient preprocessing algorithms are proposed. In the case of standard (i.e. non-persistent) homology the following preprocessing techniques were successfully used: elementary collapses~\cite{whitehead}, coreductions~\cite{core} and the acyclic subspace method~\cite{acc}. The basic idea behind these techniques is to remove a number of cells, without affecting the homology (or affecting it in a controlled way). Importantly, these methods work on raw data, that is before the boundary matrix is produced. 

On a practical note, the memory overhead of storing the boundary matrix is significant. For example, a 3-dimensional image of size $2000^3$ voxels with 16b gray-scale values takes roughly 16GB, while its boundary matrix roughly 200GB. To handle data of such sizes, it is crucial to pre-process before generating the boundary matrix.

The goal of this paper is to extend the mentioned preprocessing methods (elementary collapses, coreductions and acyclic subspace) to \emph{persistent} homology.  The techniques presented in this paper are heuristics, meaning there are no provable bounds on how many cells are reduced.  They have however performed well in practical situations as described in~\cite{MMBook}, \cite{core} and~\cite{acc}. The computational complexity of the presented techniques is linear in the number of cells provided the number of neighbors of every cell in the complex is $O(1)$ (this assumption is true for cubical and simplicial complexes which are used in practice). Therefore they can be used as an inexpensive preprocessing step. 

\section{Background}
\label{sec:background}
\subsection{Chain complexes and homology}
The presented methods work for arbitrary \emph{chain complexes} with field coefficients. In the most typical case this chain complex comes from a CW-decomposition of a given space. In practice, simplicial and cubical complexes are used. For simplicity, we use $\mathbb{Z}_2$ coefficients throughout the paper, as this is the standard setting for persistence. However, the presented algorithms works for any field coefficients. Intuitively, in this setting, a chain complex can be viewed as a set of abstract cells (e.g. cubes or simplices) connected by a boundary operator as specified below.

Let us fix a chain complex $\mathcal{K}$. Let a \emph{$p$-chain} be a formal sum of $p$-cells with the $\mathbb{Z}_2$ coefficients. The $p$-chains of $\mathcal{K}$, together with addition modulo 2, form a \emph{group of $p$-chains}, denoted by $C_p(\mathcal{K})$. The boundary operator $\partial_p$ maps $p$-chains into $(p-1)$-chains, called \emph{faces}. The definition of chain complexes requires that $\partial_p \circ \partial_{p+1} = 0$.

The chain of (co-)faces is called a (co-)boundary. For any $p$-chain $c = \sum a_{i} c_{i}$, we have $\partial_{p}c = \sum a_i \partial_{p}c_{i}$. If a $(p-1)$-cell $a$ has a $p$-cell $B$ in its coboundary, we say $a$ is a face of $B$, and $B$ is a coface of $a$. (Notation: capital letters denote higher dimensional cell where a cell and its face is considered). 
Let us have a set $A \subset \mathcal{K}$. By the boundary of $A$ we mean $bd\ A = \{ b \in \mathcal{K} | \exists_{a \in \mathcal{A}} b \in \partial a \}$. We say that $A$ is closed if $bd\ A \subset A$.

% The boundary operator $\partial_{p}$ can be written as a binary matrix (also denoted $\partial_p$), whose columns represent the boundaries and rows represent coboundaries of cells.

To define homology let us first introduce the group of $p$-cycles, $Z_p(\mathcal{K}) = ker \partial_p $ \and its subgroup: the group of $p$-boundaries, $B_p(\mathcal{K}) = im \partial_{p+1}$. The $p$-th homology group is the quotient $H_p(\mathcal{K}) = Z_p(\mathcal{K})/B_p(\mathcal{K})$. The $p$-th Betti number, denoted by $\beta_p(\mathcal{K})$, is the rank of this group.

\subsection{Filtrations and persistence}
\label{sec:introToPersistence}
Given a complex $\mathcal{K}$ and a filtering function $g:\mathcal{K}\rightarrow \mathbb{Z}$, \emph{persistent} homology studies homological changes of the sub-level complexes, $\mathcal{K}_t=g^{-1}(-\infty,t]$.  We require that $g(a) \leq g(B)$ whenever $a$ is a face of $B$ which implies that for every $t \in \mathbb{Z}$, $\mathcal{K}_t$ forms a complex i.e. the boundary of every cell in $\mathcal{K}_t$ is contained in $\mathcal{K}_t$ (we call this \emph{sublevel-complex filtration}). Although in general the filtration has values in $\mathbb{Z}$ sometimes to simplify the notation we assume that filtration starts from zero. Every filtration on a finite cell complex can be transformed to this one by a monotone change of coordinates, therefore this extra assumption does not limit the generality of the presented approach.

We say that $g$ is \emph{filtration by maxima} if for every $A \in \mathcal{K}$, $g(A) = max\{ g(b_1),\ldots,g(b_n)\}$ where $b_1,\ldots,b_n$ are the faces of $A$. Such a filtration is often used when the function values are given only on the vertices and have to be extended to higher-dimensional cells. All the presented methods works for general filtrations, but this type of filtration is convenient for examples.

Persistent homology captures the birth and death times of homology classes of the sub-level complexes, as $t$ grows from $-\infty$ to $+\infty$.
By birth, we mean that a homology class is created; by death, we mean it either becomes trivial or becomes identical to some other class born earlier. 
The \emph{persistence}, or lifetime of a class, is the difference between the death and birth times. Often a multiset of \emph{persistence intervals} is used to represent persistence in a given dimension. A single interval encodes the lifetime of a homology class. We say that two spaces have same persistence, if their persistence intervals are the same in the corresponding dimensions. We disregard zero-length persistence intervals, because they carry virtually no information. 
%Homology classes with larger persistence reveal information about the global structure of the space $\mathcal{K}$, described by the function $g$.

The formal definition is as follows (after~\cite{herbert}): The $p$-th persistent homology groups of filtered complex $\mathcal{K}$ are the images of the homomorphisms induced by inclusion, $H^{i,j}(\mathcal{K}) = im H(f^{i,j})$, where $f^{i,j} : \mathcal{K}_i \hookrightarrow \mathcal{K}_j$ for every $i,j \in \mathbb{Z}$, $i \leq j$. By $H^p(\mathcal{K})$ we denote the persistence homology of a filtered cell complex $\mathcal{K}$. So-called \emph{persistence diagrams} encode the entire information about the persistence of a filtered complex. For more details see~\cite{herbert}.

We want to remind a theorem saying when the persistence of two filtered complexes are equal:
\begin{theorem}[Persistence equivalence theorem,~\cite{herbert}]
\label{thm:pet}
Consider persistent homology of two filtered complexes $X$ and $Y$. Let $\phi_i : H_{*}(X_i) \rightarrow H_{*}(Y_i)$:
$$
\begin{CD}
  \ldots H_{*}(X_0) @>>> H_{*}(X_1) @>>> \ldots @>>> H_{*}(X_{n-1}) @>>> H_{*}(X_{n}) \ldots \\
  @V\phi_0VV @V\phi_1VV @. @V\phi_{n-1}VV @V\phi_nVV\\
  \ldots H_{*}(Y_0) @>>> H_{*}(Y_1) @>>> \ldots @>>> H_{*}(Y_{n-1}) @>>> H_{*}(Y_n) \ldots\\
\end{CD}
$$

If the $\phi_i$ are isomorphisms and all the squares commute, then the persistent homology of $X$ and $Y$ is the same.
\end{theorem}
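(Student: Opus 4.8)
The plan is to reduce the whole statement to an equality of \emph{persistent Betti numbers} $\beta^{i,j}_p = \operatorname{rank} H^{i,j}_p$. By the fundamental lemma of persistent homology (the inclusion--exclusion/Möbius inversion of~\cite{herbert}), the multiset of persistence intervals of a filtered complex in each dimension $p$ is completely determined by the ranks $\beta^{i,j}_p$ over all pairs $i \le j$, via a formula such as $\mu^{i,j}_p = (\beta^{i,j-1}_p - \beta^{i,j}_p) - (\beta^{i-1,j-1}_p - \beta^{i-1,j}_p)$. Hence it suffices to prove that $\beta^{i,j}_p(X) = \beta^{i,j}_p(Y)$ for every $i \le j$ and every $p$; the persistence intervals will then coincide automatically.

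First I would rewrite the commutativity hypothesis for a single square. Write $H(f^{i,j})\colon H_*(X_i)\to H_*(X_j)$ for the map induced by the inclusion in $X$, and let $H(g^{i,j})$ denote the analogous inclusion-induced map in $Y$. Fusing the squares over the indices $i,i+1,\dots,j$ (each of which commutes by hypothesis) yields the single commuting rectangle
$$\phi_j \circ H(f^{i,j}) = H(g^{i,j}) \circ \phi_i.$$

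Next I would compute the two images appearing on either side, using $H^{i,j}(X) = \operatorname{im} H(f^{i,j})$ and $H^{i,j}(Y) = \operatorname{im} H(g^{i,j})$. Because $\phi_i$ is an isomorphism it is surjective, so precomposing with it does not change an image: $\operatorname{im}\bigl(H(g^{i,j})\circ\phi_i\bigr) = \operatorname{im} H(g^{i,j}) = H^{i,j}(Y)$. On the other side $\phi_j$ is injective, hence it restricts to an isomorphism of $\operatorname{im} H(f^{i,j}) = H^{i,j}(X)$ onto $\operatorname{im}\bigl(\phi_j\circ H(f^{i,j})\bigr)$, so these two spaces have equal rank. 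Combining these observations with the displayed identity above gives
$$\beta^{i,j}(Y) = \operatorname{rank}\operatorname{im}\bigl(H(g^{i,j})\circ\phi_i\bigr) = \operatorname{rank}\operatorname{im}\bigl(\phi_j\circ H(f^{i,j})\bigr) = \operatorname{rank} H^{i,j}(X) = \beta^{i,j}(X).$$

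Since this holds for every pair $i \le j$ and in each dimension separately, the persistent Betti numbers of $X$ and $Y$ agree across the board, and the first paragraph then forces the persistence intervals to match. The genuinely delicate point is precisely that reduction to Betti numbers: one must justify that the interval multiplicities are recoverable from the ranks $\beta^{i,j}$ alone. This is the content of the structure theorem for persistence modules over a field, which guarantees a unique decomposition into interval summands whose endpoints are read off from these ranks; granting it, the rest is elementary linear algebra inside the commuting rectangle, where surjectivity of $\phi_i$ and injectivity of $\phi_j$ carry the entire argument.
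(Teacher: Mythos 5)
The paper does not prove this statement at all: it is quoted verbatim as a known result from~\cite{herbert} (the Persistence Equivalence Theorem of Edelsbrunner and Harer) and used as a black box in Theorems~\ref{th:reductions}, \ref{th:coreductions} and~\ref{thm:acyclic}. So there is no in-paper argument to compare against; what you have written is, in effect, the textbook proof. Your argument is correct. Composing the commuting squares to get $\phi_j \circ H(f^{i,j}) = H(g^{i,j}) \circ \phi_i$, then using surjectivity of $\phi_i$ to identify $\operatorname{im}\bigl(H(g^{i,j})\circ\phi_i\bigr)$ with $H^{i,j}(Y)$ and injectivity of $\phi_j$ to see that $\phi_j$ restricts to an isomorphism of $H^{i,j}(X)$ onto that same image, is exactly the right mechanism; in fact you get slightly more than equality of ranks, namely that $\phi_j$ induces an isomorphism $H^{i,j}(X) \cong H^{i,j}(Y)$ for all $i \le j$. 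The one load-bearing external ingredient is the reduction from ``equal persistent Betti numbers $\beta^{i,j}_p$ for all $i\le j$'' to ``equal interval multisets,'' via $\mu^{i,j}_p = (\beta^{i,j-1}_p - \beta^{i,j}_p) - (\beta^{i-1,j-1}_p - \beta^{i-1,j}_p)$ together with the structure theorem for persistence modules over a field; you flag this explicitly, it is available in the cited reference, and it applies here since the paper works with $\mathbb{Z}_2$ coefficients on finite complexes (with the usual convention for the infinite intervals read off from $\beta^{i,n}$ at the final filtration index). No gaps.
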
 

The standard algorithm to compute persistence homology is a matrix reduction algorithm presented in~\cite{herbert}. The other algorithms to compute persistence are discussed in the Section~\ref{sec:algAndComplexity}. The output of the persistent homology computations is a list of \emph{persistence pairs} of the form (birth, death).

An important justification for the usage of persistence is the stability theorem. Cohen-Steiner et al.~\cite{stability} proved that for any two filtering functions $g$ and $h$ the so called
 bottleneck distance ($d_B$, see~\cite{stability}) between the persistence of $\mathcal{K}$ with respect to $f$ (denoted as $H^p(\mathcal{K},f)$) and the persistence of $\mathcal{K}$ wrt. $g$ ($H^p(\mathcal{K},g)$) is upper bounded by the $L^\infty$ norm of the difference between $f$ and $g$:
 \begin{equation}
 d_B(H^p(\mathcal{K},f), H^p(\mathcal{K},g)) \leq \lVert f-g \rVert_\infty := \max_{x\in \mathcal{K}} \lvert f(x)-g(x) \rvert.
 \label{eq:stability}
 \end{equation}

Simply put, small changes in the filtration values cause small changes in persistence. This enables robust estimation of how persistence is affected by perturbation of the input (e.g. noise).

\subsection{Existing algorithms and their complexity}
\label{sec:algAndComplexity}
Applying the matrix-reduction algorithm described in~\cite{herbert} to the boundary matrix of the input complex is the standard way to compute persistent homology groups. It works for general complexes in arbitrary dimensions. The worst-case complexity is $O(n^3)$, where $n$ is the size of the input complex. Milosavljevic et al.~\cite{milosavljevic2011zigzag} showed that persistent homology can be computed in matrix multiplication time $O(n^\omega)$ where the currently best estimation of $\omega$ is $2.373$. Chen and Kerber~\cite{chen2011output} proposed a randomized algorithm to compute only pairs with persistence above a chosen threshold. Despite improving the theoretical complexity, it is unclear whether these methods are better in practice.

When focusing on $0$-dimensional homology, union-find data structures can be used to compute 0-dimensional persistence in time $O(n \alpha(n))$ \cite{herbert}, where $\alpha$ is the inverse of the Ackermann function and $n$ the size of the input. 

A recent variation of the standard algorithm, introduced by Chen and Kerber~\cite{chao} significantly reduces the amount of computations. This idea was also used by Wagner et al.~\cite{wagner11} to compute persistence for $n-$dimensional images. 

One can also compute persistence by computing homology of inclusion map as discussed in~\cite{MMTW} and~\cite{natalia}. This approach gives even reacher information than persistence. It is also very time consuming as noted in~\cite{gunther}.

Another class of methods involves preprocessing the input complex using discrete Morse theory, as proposed by Robins~\cite{vanessa}. Such a preprocessing significantly reduces the size of the boundary matrix, while preserving persistence. In the case of 3D grayscale images, an efficient parallel implementation was proposed in~\cite{gunther}, allowing for handling large ($\approx 1200^3$) images on commodity hardware. The standard matrix-reduction algorithm is used in the final step of computations.

The approach by Robins works for arbitrary complexes and in dimension three the preprocessing results in the smallest possible boundary matrix~\cite{vanessa} (counting the number of rows/columns). The algorithm used in~\cite{vanessa} depends crucially on simple-homotopy theory, which makes it hard to directly generalize the optimality result to higher dimensions. A recent paper by Mischaikow et al.~\cite{mischaikow} proposes a handy theoretical framework, where discrete Morse theory is extended from complexes to filtrations. 

It should be noted that the simplification methods based on discrete Morse theory aim at optimizing the number of cells. The total size of the resulting complex can in fact grow, since the number of connections can grow even quadratically in the number of cells. In other words, even if the initial boundary matrix is sparse, the matrix of the simplified complex can be dense. 
The methods presented in this paper do not have this property. Also, they can be used prior to persistence computations with discrete Morse theory.

\section{Elementary collapses.}
Elementary collapse is an old concept introduced by Whitehead in a context of simple homotopy types in~\cite{whitehead}. It is often used in the context of homology theory. It finds a pair of cells $(A,b) \in \mathcal{K} \times \mathcal{K}$ (called elementary collapse pair) such that $b$ has only one coface $A$ in $\mathcal{K}$ (in this case $b$ is called a \emph{free face}). 
%As a convention we write lowercase $b$, as its dimension is lower than dimension of $A$. 
Removing such a pair does not change homology or homotopy type of $\mathcal{K}$, since such a removal corresponds to a deformation retraction. 

In this section we show that the elementary collapses can be used also in the case of persistent homology. However, the elementary collapse pair $(A,b)$ must fulfill two extra assumptions:
\begin{enumerate}
\item $b$ is a free face for every $\mathcal{K}_n$, $n \in \mathbb{Z}$.
\item $g(A) = g(b)$.
\end{enumerate}
The first condition indicates that at every filtration level $(A,b)$ is a elementary collapse pair. It suffices to check the size of coboundary of $b$ in the final complex in the filtration. The second condition indicates that filtration levels of $A$ and $b$ need to be equal. In Figure~\ref{fig:elementaryRed} we show that these are in fact necessary conditions in order to preserve persistent homology information.
\begin{figure}[!h]
\centerline{\includegraphics[scale=0.3]{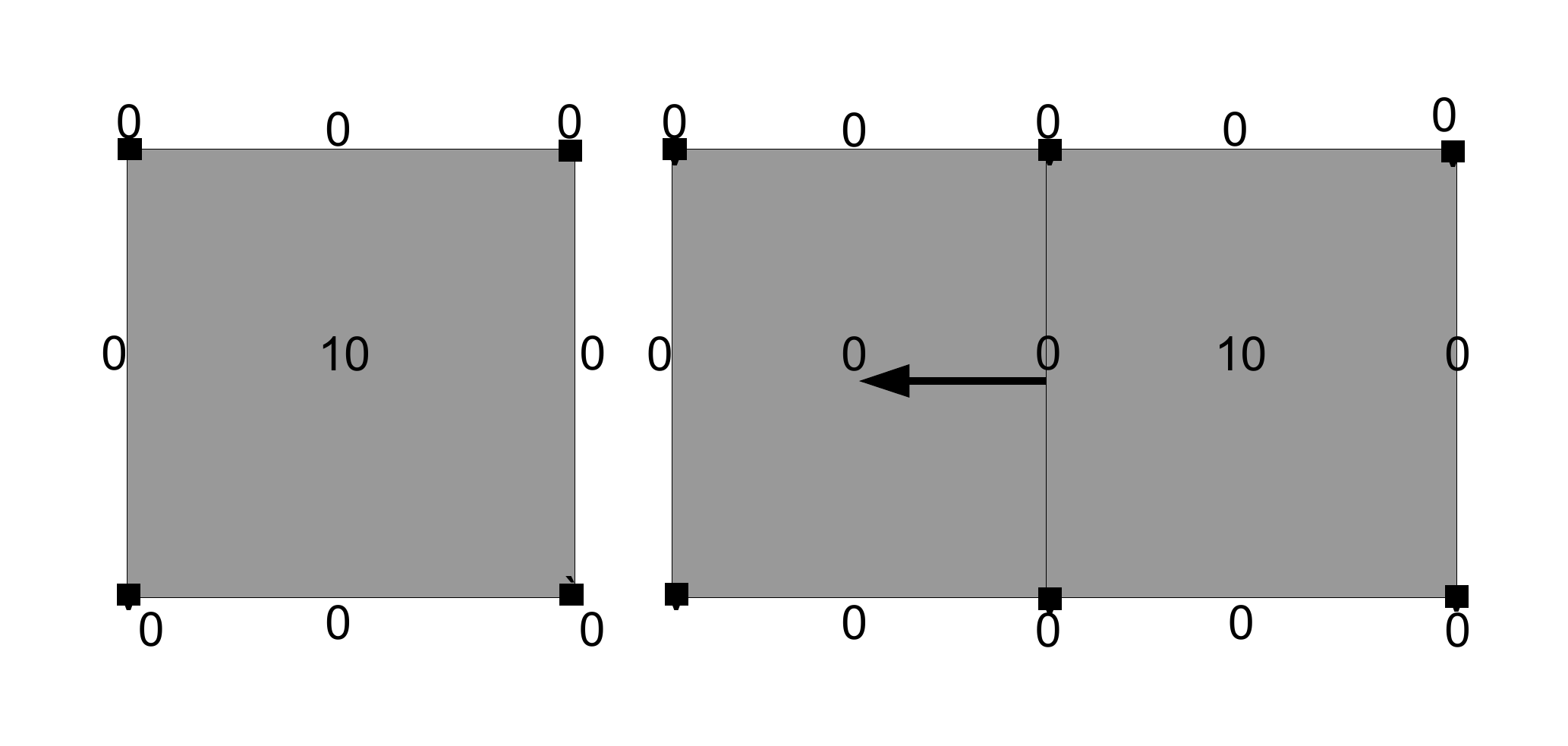}}
\caption{On the left the complex consisting of a single cube is depicted. All the vertices and edges have filtration level $0$, while the two dimensional cube has filtration level $10$. Consequently in the case of one-dimensional persistent homology an interval $[0,10]$ appears. However when any elementary collapse is performed, the interval does not appear anymore. Therefore the assumption $g(A) = g(b)$ is necessary. On the right picture, if an elementary collapse is performed at level $0$ as indicated by the arrow, at sub-level $10$ we are missing the shared edge and the 2-cell having value $0$, so the homology (and consequently persistence) is changed.
}
\label{fig:elementaryRed}
\end{figure}

Let us now prove that the presented extra assumptions guarantee that after removing a pair $(A,b)$, persistent homology of a complex $\mathcal{K}$ does not change. 

\begin{theorem}
\label{th:reductions}
Let $(A,b) \in \mathcal{K} \times \mathcal{K}$ be a elementary collapse pair and $b$ be a free face in $\mathcal{K}_n$ for every $n \in \mathbb{Z}$. Moreover let $g(A) = g(b)$. Then $H_{*}^p(\mathcal{K}) = H_{*}^p(\mathcal{K} \setminus \{A,b\})$.
\end{theorem}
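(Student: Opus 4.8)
The plan is to deduce the statement from the Persistence Equivalence Theorem (Theorem~\ref{thm:pet}). Write $\mathcal{K}' = \mathcal{K} \setminus \{A,b\}$, equipped with the restriction of $g$, and set $s := g(A) = g(b)$. First I would note that $\mathcal{K}'$ is again a valid filtered complex: since $b$ is a free face, $A$ is its only coface, and a quick $\partial\partial = 0$ argument shows $A$ itself has no coface, so deleting $\{A,b\}$ leaves a closed subcomplex. For every $t \in \mathbb{Z}$ the inclusion $\mathcal{K}'_t \hookrightarrow \mathcal{K}_t$ is a map of complexes, and I would take the induced maps $\phi_t : H_*(\mathcal{K}'_t) \to H_*(\mathcal{K}_t)$ as the vertical arrows of the ladder in Theorem~\ref{thm:pet}. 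It then suffices to check two things: that each $\phi_t$ is an isomorphism, and that every square commutes.

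For the isomorphisms I would argue level by level. Because $g(A)=g(b)=s$, the cells $A$ and $b$ enter the filtration simultaneously: if $t<s$ then neither belongs to $\mathcal{K}_t$, so $\mathcal{K}'_t = \mathcal{K}_t$ and $\phi_t$ is the identity; if $t \geq s$ then both belong to $\mathcal{K}_t$, so $\mathcal{K}_t = \mathcal{K}'_t \cup \{A,b\}$. The hypothesis that $b$ is a free face in $\mathcal{K}_n$ for every $n$ guarantees that, in this latter range, $A$ is the unique coface of $b$ inside $\mathcal{K}_t$; hence $(A,b)$ is an elementary collapse pair of $\mathcal{K}_t$ and the passage from $\mathcal{K}_t$ to $\mathcal{K}'_t$ is a deformation retraction. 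Consequently the inclusion $\mathcal{K}'_t \hookrightarrow \mathcal{K}_t$ is a homotopy equivalence and $\phi_t$ is an isomorphism. Over a field one can equally well exhibit the explicit algebraic reduction cancelling $b$ against $A$ together with its chain homotopy, which is the version matching the chain-complex setting of the paper.

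Commutativity of the squares is then essentially formal. Both horizontal maps (in the $\mathcal{K}'$ row and the $\mathcal{K}$ row) and both vertical maps $\phi_i, \phi_j$ are induced by inclusions of complexes, and the underlying square of inclusions relating $\mathcal{K}'_i, \mathcal{K}'_j, \mathcal{K}_i, \mathcal{K}_j$ commutes on the nose, since both composites equal the single inclusion $\mathcal{K}'_i \hookrightarrow \mathcal{K}_j$. Functoriality of homology then gives commutativity after applying $H_*$. With both hypotheses of Theorem~\ref{thm:pet} verified, it yields $H_{*}^p(\mathcal{K}') = H_{*}^p(\mathcal{K})$, as claimed.

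I expect the only genuine work to lie in the level-wise isomorphism in the range $t \geq s$: one must be certain that $b$ remains a free face after intersecting with every sublevel set, which is exactly what the assumption ``free face in $\mathcal{K}_n$ for every $n$'' provides, and that the equal-filtration condition $g(A)=g(b)$ is precisely what rules out the degenerate configurations of Figure~\ref{fig:elementaryRed}, where $b$ would appear strictly before its coface $A$ and hence could not be collapsed. Once this is settled, the commutativity of the ladder and the final appeal to Theorem~\ref{thm:pet} are routine.
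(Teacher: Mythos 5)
Your proof is correct and follows essentially the same route as the paper: both invoke the Persistence Equivalence Theorem, establish the levelwise isomorphisms from the fact that $(A,b)$ is a collapse pair in each $\mathcal{K}_t$ (using $g(A)=g(b)$ and the free-face hypothesis at every level), and then check that the ladder commutes. The only cosmetic difference is that you orient the vertical maps as the inclusion-induced $H_*(\mathcal{K}'_t)\to H_*(\mathcal{K}_t)$, so commutativity is immediate from functoriality, whereas the paper runs the vertical maps the other way and verifies commutativity by an explicit chain computation.
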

\begin{proof}
The proof bases on Theorem~\ref{thm:pet}. Let us consider maps in homology induced by inclusions $\mathcal{K}_1 \subset \mathcal{K}_2 \subset \ldots \subset \mathcal{K}_n$ before and after removal of a pair $(A,b)$:
$$
\begin{CD}
  \ldots @>H(i_{l-1})>> H_{*}(\mathcal{K}_l) @>H(i_{l})>> H_{*}(\mathcal{K}_{l+1}) @>H(i_{l+1})>> \ldots \\
  @VVV@VV H(j_{l}) V@VV H(j_{l+1}) V @VVV\\
  \ldots @>H(k_{l-1})>> H_{*}(\mathcal{K}_l \setminus \{A,b\}) @>H(k_{l})>> H_{*}(\mathcal{K}_{l+1} \setminus \{A,b\}) @>H(k_{l+1})>> \ldots\\
\end{CD}
$$

The horizontal maps are the maps induced in homology by inclusion map $i_l: \mathcal{K}_{l} \hookrightarrow  \mathcal{K}_{l+1}$ and $k_l: \mathcal{K}_{l} \setminus \{A,b\} \hookrightarrow  \mathcal{K}_{l+1} \setminus \{A,b\}$. The vertical maps $j_l : H_{*}(\mathcal{K}_{l}) \rightarrow H_{*}(\mathcal{K}_{l}\setminus \{A,b\})$ are isomorphisms, since a pair $(A,b)$ is elementary collapse pair in every $\mathcal{K}_l$ and $g(A) = g(b)$. 

To show that all squares commute, let us pick a chain $c \in C(\mathcal{K}_l)$. Then $j_l(c) = c -  \langle c , A \rangle A - \langle c , b\rangle b$ i.e. the generators $A$ and $b$ are removed form the chain $c$. We see that $k_l( j_l(c))$ is simply $j_l(c) \in C(\mathcal{K}_{l+1} \setminus \{A,b\})$. Let us now compute $j_{l+1}(i_l(c))$. We have $i_l(c) = c \in C(\mathcal{K}_{l+1})$. To obtain $j_{l+1}(i_l(c))$ it again suffices to remove from $i_l(c)$ the generators $A$ and $b$, which is $j_{l+1}(i_l(c)) =  c -  \langle c , A \rangle A - \langle c , b\rangle b$. Therefore $j_{l+1}(i_l(c)) = k_l( j_l(c) )$ so all the squares commute.

Therefore due to Theorem~\ref{thm:pet} persistence modules defined by lower and upper sequence of complexes are equal, which completes the proof. 
\end{proof}

It might seem that there could be just a few elementary collapse pairs to remove in the external boundary of the considered complex. However the reduction process usually creates new collapse pairs and it can be continued.

The idea of the described procedure is summarized in Algorithm~\ref{alg:elemCollapses}.

\begin{algorithm}[!ht]
  \small
  \caption{Elementary collapses for persistence.}
  \label{alg:elemCollapses}
  \begin{algorithmic}
	\REQUIRE $\mathcal{K}$ - cell complex with filtration $g : \mathcal{K} \rightarrow \mathbb{Z}$;
	\ENSURE Reduced complex $\mathcal{K}'$ with the same persistence as $\mathcal{K}$.
	\STATE $\mathcal{K}' = \mathcal{K}$ 
    \STATE List of cells \texttt{Q}$= \emptyset$;
    \FOR {every cell $b \in \mathcal{K}'$}
		\IF {$b$ has unique cell $A$ in coboundary and $g(b) = g(A)$}
			\STATE \texttt{Q}$.enqueue(b)$;
		\ENDIF
    \ENDFOR
    \WHILE { \texttt{Q} is not empty }
		\STATE $b\ =$ \texttt{Q}$.dequeue()$;
		\IF { $b$ has unique cell $A$ in coboundary and $g(b) = g(A)$ }
			\STATE $\mathcal{K}' = \mathcal{K}' \setminus \{A,b\}$;
			\FOR {every element $c$ in boundary of $A$ or boundary of $b$}
				\IF {$c$ has unique cell $D$ in coboundary and $g(c) = g(D)$}
					\STATE \texttt{Q}$.enqueue(c)$;
				\ENDIF
			\ENDFOR
		\ENDIF
    \ENDWHILE
  \end{algorithmic}
\end{algorithm} 
\section{Coreductions.}
The concept of coreductions was introduced by Mrozek and Batko in~\cite{core}. It is based on an idea to search for an homologically trivial sub-complex in the given complex starting from lowest possible dimension (bottom-up). The approach to find acyclic subcomplexes that uses only top dimensional cells are shown in Section~\ref{sec:accSubspace}. The formal presentation requires the concept of S-complex, which is a chain complex with a fixed basis. This concept is not recalled here for brevity's sake. For further details consult~\cite{core}.

Pair of cells $(A,b)$ is a \emph{coreduction pair} if $b$ is the unique element in boundary of $A$. Such a pair cannot exist in a simplicial or cubical complex. Therefore in~\cite{core} first a single vertex is removed from each connected component of the considered complex. This removal changes only the zero dimensional homology. Now, the process of coreduction is iterated as long as one can find a coreduction pair in the considered complex. 

The coreduction algorithm was already used to compute homology of inclusions and persistent homology in~\cite{MMTW}. However the approach there was different -- in~\cite{MMTW} the coreduction was used to remove as many elements as possible before computing map in homology induced by inclusion map between levelsets. To be precise: at each level of filtration the homology generators were computed and then, by solving system  of linear equations one obtained the matrix mapping generators at $n$th level to generators at $(n+1)$th level. 
This procedure is more general than computing persistence. As indicated in~\cite{wagner11} this strategy is in general not efficient for computing only persistence. Here we use coreduction algorithm as a preprocessing for the standard algorithm to compute persistence, as presented in~\cite{herbert}.

First we show that removing a coreduction pair $(A,b)$ such that $g(A) = g(b)$ does not change the persistent homology. Then we show that removing a vertex changes only $0$-dimensional persistence.

Since Corollary 4.2. from~\cite{core} is extensively used in this section, we recall it here:
\begin{theorem}[Corollary 4.2~\cite{core}]
\label{th:42MMBB}
Let $\mathcal{K}$ be a chain complex (without filtration). If $(A,b)$ is a coreduction pair in $\mathcal{K}$, then $H(\mathcal{K})$ and $H(\mathcal{K} \setminus \{A,b\})$ are isomorphic.
\end{theorem}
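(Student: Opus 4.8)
The plan is to exhibit $\{A,b\}$ as spanning an \emph{acyclic} subcomplex of $C_*(\mathcal{K})$ and then to read off the isomorphism from the long exact sequence in homology. The whole argument hinges on one observation: since $(A,b)$ is a coreduction pair, $b$ is the unique cell in $\partial A$, so over $\mathbb{Z}_2$ we have $\partial A = b$. Applying $\partial$ once more gives $\partial b = \partial(\partial A) = 0$, so $b$ is automatically a cycle, which is exactly what the next step needs.

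First I would set $S = \langle A, b\rangle \subset C_*(\mathcal{K})$, the subspace generated by $A$ in degree $p=\dim A$ and $b$ in degree $p-1$. Because $\partial A = b \in S$ and $\partial b = 0 \in S$, the subspace $S$ is closed under the boundary operator and is therefore a genuine subcomplex. Moreover $\partial\colon \langle A\rangle \to \langle b\rangle$ is an isomorphism, so $H_p(S) = \ker\partial = 0$ and $H_{p-1}(S) = \langle b\rangle / \operatorname{im}\partial = 0$; that is, $S$ is acyclic. (Note that $\mathcal{K}\setminus\{A,b\}$ is \emph{not} in general a subcomplex, since the boundary of a remaining cell may still contain $b$; it plays the role of the quotient below.)

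Next I would form the short exact sequence of chain complexes
\[
0 \longrightarrow S \longrightarrow C_*(\mathcal{K}) \longrightarrow C_*(\mathcal{K})/S \longrightarrow 0
\]
and identify the quotient $C_*(\mathcal{K})/S$ with the chain complex of $\mathcal{K}\setminus\{A,b\}$, whose cells are those of $\mathcal{K}$ other than $A$ and $b$ and whose boundary is induced by $\partial$. Passing to the long exact sequence in homology and using $H_*(S)=0$, every connecting portion collapses and the projection induces isomorphisms $H_n(\mathcal{K}) \cong H_n(C_*(\mathcal{K})/S) = H_n(\mathcal{K}\setminus\{A,b\})$ in each degree, which is the claim.

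The step I expect to require the most care is the identification of $C_*(\mathcal{K})/S$ with $C_*(\mathcal{K}\setminus\{A,b\})$: one must check that deleting the generators $A$ and $b$ and passing to the induced boundary yields precisely the S-complex $\mathcal{K}\setminus\{A,b\}$, i.e. that the boundary is the expected ``drop the $A$ and $b$ terms'' map and that it squares to zero. Here the structure $\partial A = b$ helps decisively: the usual algebraic-reduction correction term $\langle\partial c, b\rangle\,\langle\partial A, d\rangle$ vanishes for every remaining cell $d\neq b$ (since $\partial A$ has no component other than $b$), so the induced boundary is simply the restriction of $\partial$ with the $A$ and $b$ components discarded, and $\partial^2=0$ on the quotient is then automatic. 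An alternative to the exact-sequence route would be to write down an explicit strong deformation retraction (a projection $\pi$, an inclusion $\iota$, and a chain homotopy supported on $A$) in the style of discrete Morse theory; this is more hands-on but makes the same vanishing phenomenon the crux of the computation.
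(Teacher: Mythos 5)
Your argument is correct, but note that the paper does not prove this statement at all: it is quoted verbatim as Corollary~4.2 of Mrozek and Batko's coreduction paper and used as a black box, so there is no internal proof to compare against. Measured against the original source, your route is genuinely different. Mrozek and Batko establish the isomorphism by constructing an explicit chain equivalence between $C_*(\mathcal{K})$ and the reduced S-complex (a projection, a section, and a chain homotopy supported on the coreduction pair), which is the ``hands-on'' alternative you mention at the end. You instead observe that $\partial A = b$ forces $\partial b = 0$, so $S=\langle A,b\rangle$ is an acyclic two-cell subcomplex, and you read the isomorphism off the long exact sequence of $0\to S\to C_*(\mathcal{K})\to C_*(\mathcal{K})/S\to 0$. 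Both are sound; your key verification --- that the correction term in the induced boundary vanishes because $\partial A$ has no component other than $b$, so the quotient boundary is literally ``drop the $A$ and $b$ coordinates'' --- is exactly right and is what identifies $C_*(\mathcal{K})/S$ with the S-complex $\mathcal{K}\setminus\{A,b\}$. Your choice of the quotient map as the witnessing isomorphism is also the more useful one for this paper: it coincides with the map $j_l(c) = c - \langle c,A\rangle A - \langle c,b\rangle b$ that Theorem~\ref{th:coreductions} needs in order to make the squares commute, whereas the exact-sequence argument by itself does not hand you the explicit homotopy data that the algorithmic (Morse-theoretic) applications of the coreduction method sometimes require. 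One small caveat: your acyclicity computation silently uses that the coefficient $\langle\partial A,b\rangle$ is invertible; this is automatic over $\mathbb{Z}_2$ (the paper's setting) and over any field, but it is the one hypothesis that would need to be flagged if the claim were stated over a general ring.
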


Now we are ready to present the main theorem of this section, analogous to Threorem~\ref{th:reductions}:

\begin{theorem}
\label{th:coreductions}
Let $(A,b) \in \mathcal{K} \times \mathcal{K}$ be a coreduction pair. Moreover let $g(A) = g(b)$. Then $H_{*}^p(\mathcal{K}) = H_{*}^p(\mathcal{K} \setminus \{A,b\})$. 
\end{theorem}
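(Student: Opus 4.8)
The plan is to mirror the proof of Theorem~\ref{th:reductions}, replacing the elementary-collapse projection by the coreduction chain equivalence and invoking Theorem~\ref{th:42MMBB} in place of the deformation-retraction argument. First I would form, for each filtration level, the ladder of inclusion-induced maps
$$
\begin{CD}
  \ldots @>>> H_{*}(\mathcal{K}_l) @>>> H_{*}(\mathcal{K}_{l+1}) @>>> \ldots \\
  @. @VV \phi_l V @VV \phi_{l+1} V @. \\
  \ldots @>>> H_{*}(\mathcal{K}_l \setminus \{A,b\}) @>>> H_{*}(\mathcal{K}_{l+1} \setminus \{A,b\}) @>>> \ldots
\end{CD}
$$
where the vertical map $\phi_l$ is induced by the coreduction removing the pair $(A,b)$, and the bottom row is understood as the coreduced S-complex (with the modified boundary of~\cite{core}), since deleting $A$ need not leave a geometric subcomplex. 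The goal is then to feed this ladder into Theorem~\ref{thm:pet}.

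Second, I would argue that every $\phi_l$ is an isomorphism. The key observation is that the coreduction condition ``$b$ is the unique element of $\partial A$'' is intrinsic to the cell $A$ and the global boundary operator, hence independent of the filtration level. Because $g(A)=g(b)$, the cells $A$ and $b$ enter the filtration simultaneously: for $t<g(A)$ neither lies in $\mathcal{K}_t$, so $\mathcal{K}_t\setminus\{A,b\}=\mathcal{K}_t$ and $\phi_t$ is the identity; for $t\ge g(A)$ both lie in $\mathcal{K}_t$, the only face $b$ of $A$ is present, and $\partial A=b$ still holds in $\mathcal{K}_t$, so $(A,b)$ is a coreduction pair there and Theorem~\ref{th:42MMBB} supplies the isomorphism $\phi_t$.

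Third --- and this is where the real work lies --- I would verify that every square commutes. Unlike the collapse case, where the vertical isomorphism is the bare projection $c\mapsto c-\langle c,A\rangle A-\langle c,b\rangle b$, here a cycle may genuinely contain $A$: the term $\partial A=b$ can be cancelled against the boundary contribution of another cell carrying $b$ (for instance, with a removed vertex the cycle $A+e_1+e_2$ above a triangle). Consequently the coreduction equivalence is not a mere deletion of generators; it comes with the reduced boundary that suppresses $b$ and with correction terms dictated \emph{solely} by the fixed pair $(A,b)$ and the boundary operator. Since both agree on $\mathcal{K}_l$ and $\mathcal{K}_{l+1}$ and are compatible with the inclusion $\mathcal{K}_l\hookrightarrow\mathcal{K}_{l+1}$, the coreduction chain map is natural with respect to this inclusion, which is exactly the commutativity of the squares. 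The transition square at $t=g(A)$, where the left vertical is the identity, is covered by the same naturality, because a chain supported in $\mathcal{K}_{g(A)-1}$ contains neither $A$ nor $b$.

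Finally, with isomorphic verticals and commuting squares in hand, I would invoke Theorem~\ref{thm:pet} to conclude $H_{*}^p(\mathcal{K})=H_{*}^p(\mathcal{K}\setminus\{A,b\})$. I expect the main obstacle to be writing the chain-level coreduction map explicitly enough to exhibit its naturality cleanly; once the correction term and the reduced boundary are spelled out, commutativity should follow by direct inspection, but --- in contrast to Theorem~\ref{th:reductions} --- it cannot simply be read off a projection formula.
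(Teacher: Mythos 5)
Your proposal is correct and follows essentially the same route as the paper: the same ladder of inclusion-induced maps, Theorem~\ref{th:42MMBB} for the vertical isomorphisms (after observing that $g(A)=g(b)$ makes $(A,b)$ a coreduction pair at every filtration level in which both cells are present), and Theorem~\ref{thm:pet} to conclude. Your extra caution about the commutativity of the squares is reasonable --- the paper dismisses it as ``analogous'' to Theorem~\ref{th:reductions} --- but the situation is simpler than you fear: for a coreduction pair $\partial^2 A = 0$ forces $\partial b = 0$, so the plain projection $c \mapsto c - \langle c,A\rangle A - \langle c,b\rangle b$ is already a chain map onto the restricted complex and the squares commute by the identical computation.
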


\begin{proof}
First we point out that if $(A,b)$ is a coreduction pair and $g(A) = g(b)$, then it is a coreduction pair in every filtration level in which $A$ and $b$ exist. 
Let us consider the filtered complex $\mathcal{K}$ before and after removal of a coreduction pair $(A,b)$:

$$
\begin{CD}
  \ldots @>H(i_{l-1})>> H_{*}(\mathcal{K}_l) @>H(i_{l})>> H_{*}(\mathcal{K}_{l+1}) @>H(i_{l+1})>> \ldots \\
  @VVV@VV H(j_{l}) V@VV H(j_{l+1}) V @VVV\\
  \ldots @>H(k_{l-1})>> H_{*}(\mathcal{K}_l \setminus \{A,b\}) @>H(k_{l})>> H_{*}(\mathcal{K}_{l+1} \setminus \{A,b\}) @>H(k_{l+1})>> \ldots\\
\end{CD}
$$

The horizontal maps $i_l: \mathcal{K}_{l} \hookrightarrow  \mathcal{K}_{l+1}$ and $k_l: \mathcal{K}_{l} \setminus \{A,b\} \hookrightarrow  \mathcal{K}_{l+1} \setminus \{A,b\}$ are inclusion maps. The vertical maps $H(j_l) : H_{*}(\mathcal{K}_{l}) \rightarrow H_{*}(\mathcal{K}_{l}\setminus \{A,b\})$ are isomorphism due to the Theorem~\ref{th:42MMBB} and the fact that $\{A,b\}$ is a coreduction pair in both $\mathcal{K}_l$ and $\mathcal{K}_{l+1}$ and $g(A) = g(b)$. Analogously as in Theorem~\ref{th:reductions} one can show that all the squares commute. Therefore, due to Theorem~\ref{thm:pet} persistence defined by lower and upper sequence of complexes is equal, which completes the proof.

\end{proof}

It remains to show that we can remove a single vertex $V$ from the complex, and this changes only the zero dimensional persistent homology. The fact that higher dimensional persistence is not affected follows from the definition of reduced homology and persistence. To be precise -- let us remind the augmented chain complex in the setting of reduced homology:

\[
\xrightarrow{\partial_{n+1}} C_n(\mathcal{K}) \xrightarrow{\partial_{n}} \ldots \xrightarrow{\partial_{2}} C_1(\mathcal{K}) \xrightarrow{\partial_{1}} C_0(\mathcal{K}) \xrightarrow{\epsilon} \mathbb{Z}_2 \rightarrow 0
\]
 where $\epsilon(\sum_i \alpha_i t_i) = \sum_i \alpha_i$ for $t_i \in \mathcal{K}$.
 
 In this setting, removal of the initial vertex $V$ can be interpreted as a removal of a coreduction pair $(V,\emptyset)$, where $\emptyset$ is the unique generator of $\mathbb{Z}_2$ in dimension -1. From the properties of the reduced homology it follows that removing $V$ changes only the zero dimensional homology (and persistence).

In the standard homology, once the coreduction pairs are removed as long as possible, all vertices in a given connected component are removed. The example in Figure~\ref{fig:elementaryCoredNotSufficeForCC} show that in the case of persistent homology this does not hold: Even if we start from the vertex with the lowest filtration value and perform the coreductions as long as possible, some vertices may remain in the considered connected component.

\begin{figure}[!h]
\centerline
{\includegraphics[scale=0.3]{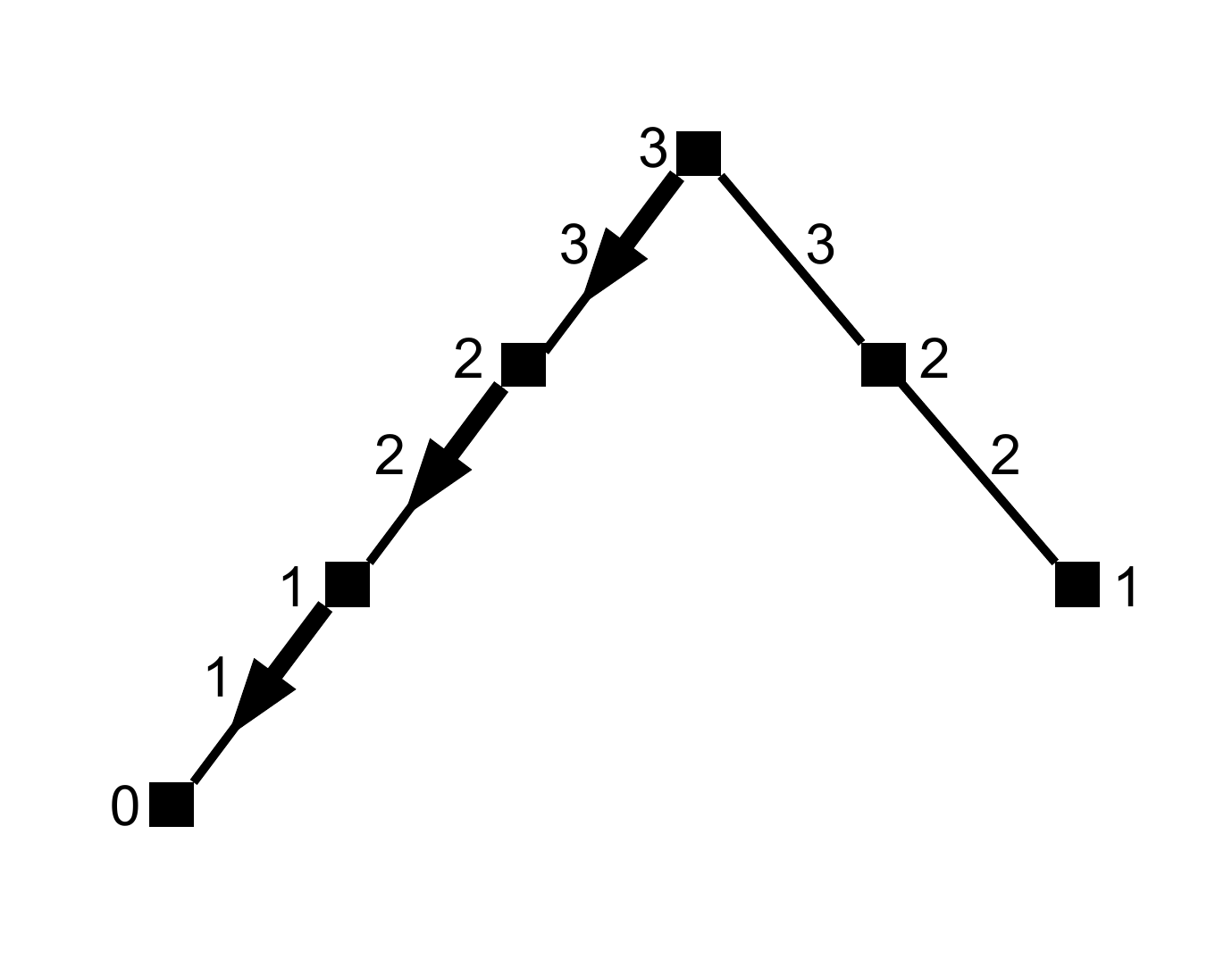}}
\caption{One dimensional complex, filtered with maxima. Numbers indicate the filtration values. Initially the vertex having value $0$ is removed. Arrows indicate a pairings done by coreductions. This example shows that unlike the case of standard homology, not all vertices form the connected component can be removed during coreductions.}
\label{fig:elementaryCoredNotSufficeForCC}
\end{figure}
As it can be seen at Figure~\ref{fig:zeroDimIsLost} the whole information about zero-dimensional persistence is lost when the coreductions are done. However it is fairly easy to compute zero dimensional persistence in near linear time from the original complex by using find-union data structures~\cite{herbert}.
\begin{figure}[!h]
\centerline{\includegraphics[scale=0.3]{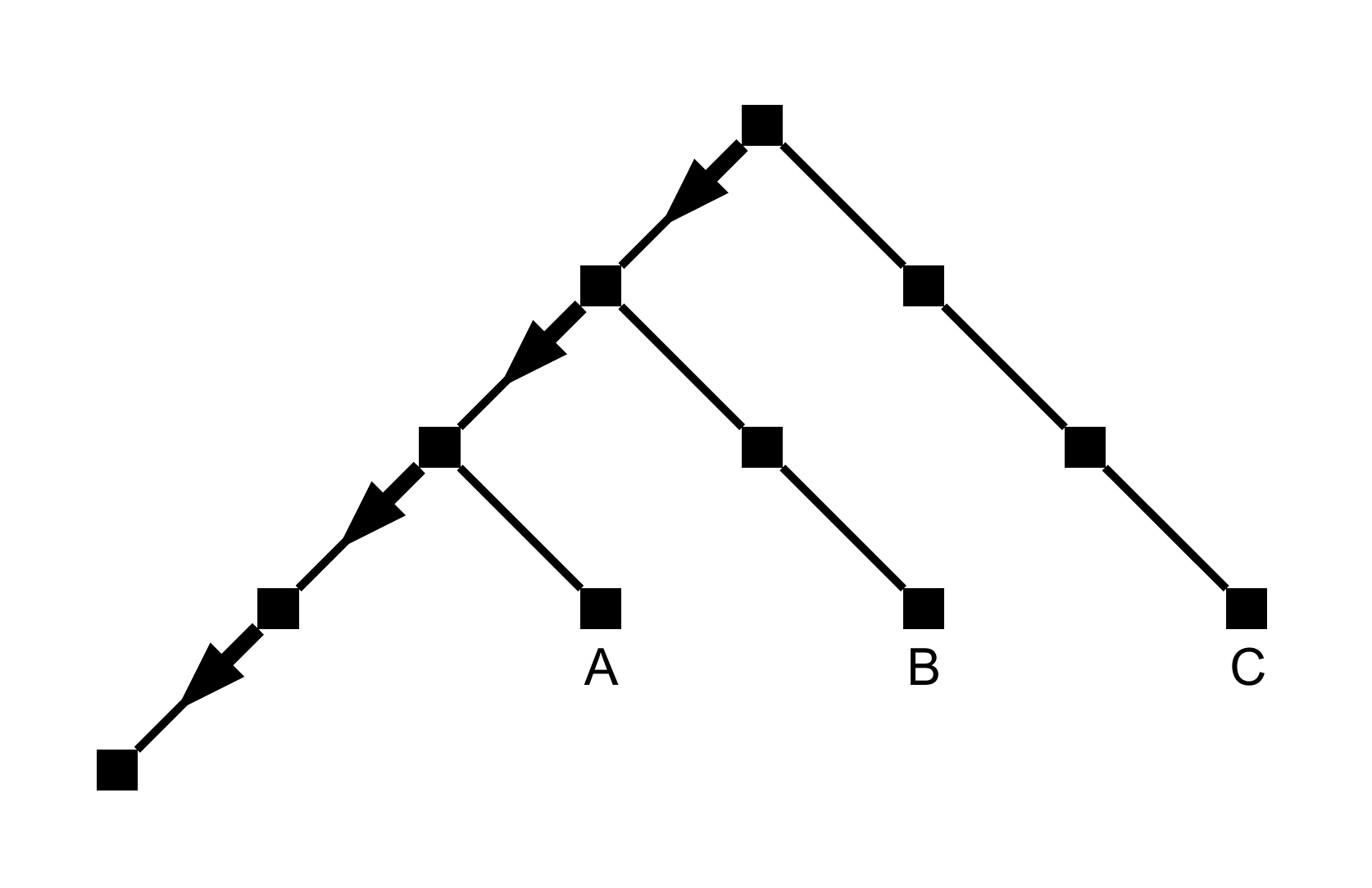}}
\caption{An example showing that when the coreductions are done the information about zero dimensional persistence is lost. For the sake of clarity the filtration is simply a height function (it is not depicted with numbers). We assume the filtration of an edge is the maximal filtration level of its vertices. The coreductions are started by removing left-bottom vertex and are indicated by arrows. It is easy to see that in the coreduced complex vertices $A$, $B$ and $C$ generates infinite persistence intervals in zero dimensional persistent homology. Those infinite intervals are not present in persistent homology of the initial complex.}
\label{fig:zeroDimIsLost}
\end{figure}

As it can be seen in Figure~\ref{fig:elementaryCored} removing a coreduction pair $(A,b)$ such that $g(A) > g(b)$ does change the persistent homology of the given complex. Therefore the  assumptions presented in Theorem~\ref{th:coreductions} are crucial.  
\begin{figure}[!h]
\centerline{\includegraphics[scale=0.3]{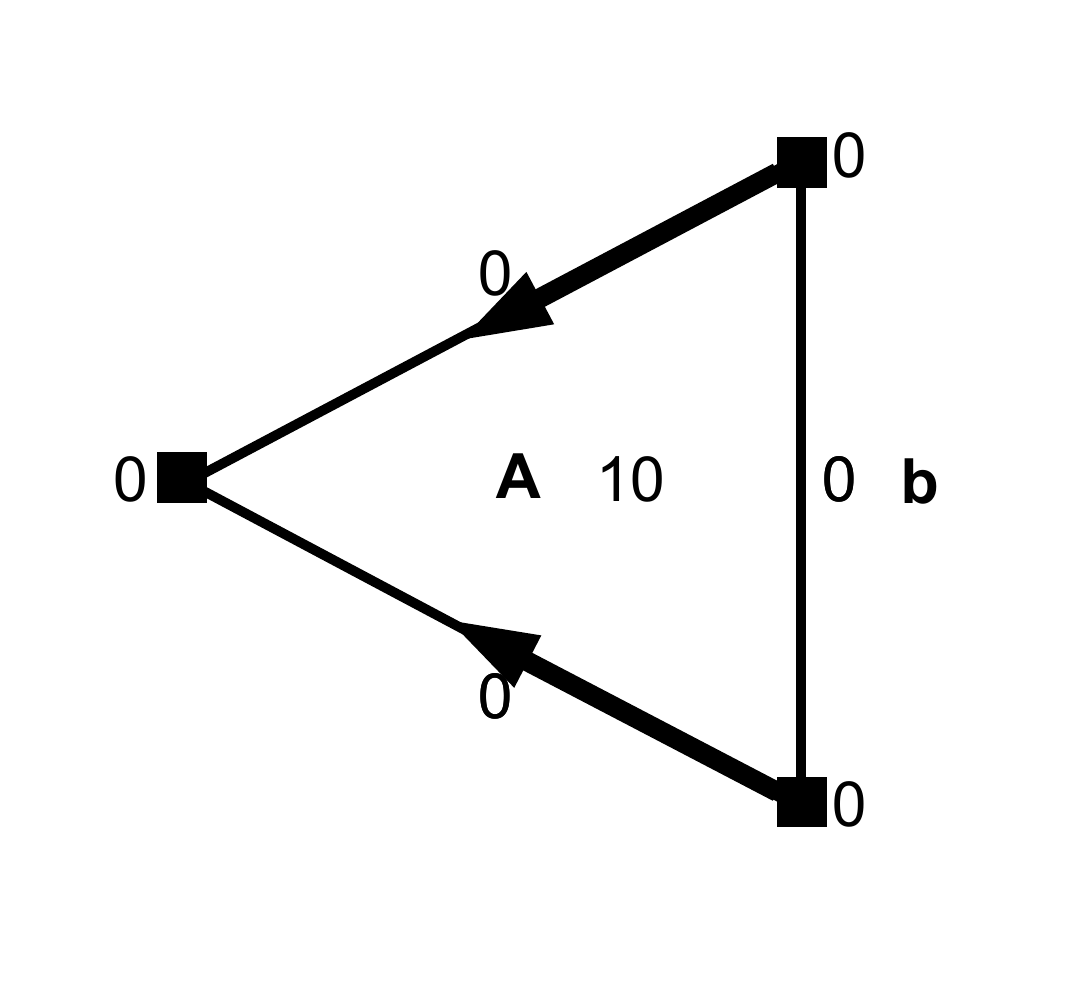}}
\caption{Simple demonstration showing that condition $g(A) = g(b)$ is necessary when removing coreduction pair $(A,b)$. Suppose a coreduction pair $(A,b)$ presented in the picture is removed (the vertex on the left and the edges paired with remaining vertices are already removed from the complex). It is then clear that once pair $(A,b)$ is removed an interval $[0,10]$ is lost from one dimensional persistence.}
\label{fig:elementaryCored}
\end{figure}

To summarize this section in Algorithm~\ref{alg:core} the coreductions for persistence are presented.

\begin{algorithm}[!ht]
  \small
  \caption{Coreductions for persistence.}
  \label{alg:core}
  \begin{algorithmic}
	\REQUIRE $\mathcal{K}$ - cell complex with filtration $g : \mathcal{K} \rightarrow \mathbb{Z}$;
	\ENSURE Reduced complex $\mathcal{K}'$ with the same persistence for dimensions $\geq 1$ as $\mathcal{K}$.
	\STATE $\mathcal{K}' = \mathcal{K}$; 
    \STATE List of cells \texttt{Q}$= \emptyset$;
    \FOR {every connected component of $\mathcal{K}'$}
		\STATE Remove a single vertex $V$ from considered connected component of $\mathcal{K}'$;
    \ENDFOR
    \FOR {every cell $B \in \mathcal{K}'$}
		\IF {$B$ has unique cell $a$ in boundary and $g(B) = g(a)$}
			\STATE \texttt{Q} $.enqueue(B)$;
		\ENDIF
    \ENDFOR
    \WHILE { \texttt{Q} is not empty }
		\STATE $B\ =\ $\texttt{Q}$.dequeue()$;
		\IF { $B$ has unique cell $a$ in boundary and $g(B) = g(a)$ }
			\STATE $\mathcal{K}' = \mathcal{K}' \setminus \{B,a\}$;
			\FOR {every element $C$ in coboundary of $a$ or coboundary of $B$}
				\IF {$C$ has unique cell $d$ in boundary and $g(C) = g(d)$}
					\STATE \texttt{Q} $.enqueue(C)$;
				\ENDIF
			\ENDFOR
		\ENDIF
    \ENDWHILE
  \end{algorithmic}
\end{algorithm} 
\section{Acyclic subspace.}
\label{sec:accSubspace}
The acyclic subspace method follows from the excision property. In standard homology theory it states that for a complex $\mathcal{K}$ and a subcomplex $\mathcal{A}$ of $\mathcal{K}$ such that $H_{n}(\mathcal{A}) = 0$ for $n > 0$ and $H_{0}(\mathcal{A}) = \mathbb{Z}^n$, where $n$ is the number of connected components of $\mathcal{K}$, we have $H_{n}(\mathcal{K}) = H_n(\mathcal{K},\mathcal{A})$ for $n>0$.
Since $\mathcal{A}$ is closed we can use the following theorem from~\cite{core}:
\begin{theorem}[Theorem 3.1~\cite{core}]
\label{th:MMBB31}
If $\mathcal{A}$ is closed in $\mathcal{K}$, then $H_{*}( \mathcal{K} \setminus \mathcal{A} ) = H_{*}( \mathcal{K} , \mathcal{A} )$.
\end{theorem}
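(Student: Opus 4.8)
The plan is to identify the homology of the open complement $\mathcal{K}\setminus\mathcal{A}$ with the relative homology $H_{*}(\mathcal{K},\mathcal{A})$ by producing an isomorphism of chain complexes, after which the equality of homology groups is immediate. The only genuine content of the statement is the role of the closedness hypothesis; once the right identification is set up the conclusion is essentially tautological.

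First I would exploit the assumption that $\mathcal{A}$ is closed, i.e. $bd\ \mathcal{A}\subset\mathcal{A}$. This means the boundary of every cell of $\mathcal{A}$ is a chain supported on $\mathcal{A}$, so $\partial$ restricts to $C_{*}(\mathcal{A})$ and $C_{*}(\mathcal{A})$ is a subchain complex of $C_{*}(\mathcal{K})$. This is exactly what is needed to form the quotient chain complex $C_{*}(\mathcal{K})/C_{*}(\mathcal{A})$, whose homology is by definition the relative homology $H_{*}(\mathcal{K},\mathcal{A})$.

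Next I would match the S-complex structure carried by $\mathcal{K}\setminus\mathcal{A}$ with this quotient. As a $\mathbb{Z}_2$-vector space, $C_{*}(\mathcal{K})/C_{*}(\mathcal{A})$ has a distinguished basis given by the cosets of the cells lying in $\mathcal{K}\setminus\mathcal{A}$, which yields a canonical identification with the free module generated by those cells. Under this identification the induced quotient differential sends a cell $c\in\mathcal{K}\setminus\mathcal{A}$ to $\partial^{\mathcal{K}\setminus\mathcal{A}} c = \sum_{e\in\mathcal{K}\setminus\mathcal{A}} \langle \partial c, e\rangle\, e$, that is, the ordinary boundary with every face lying in $\mathcal{A}$ deleted; this is precisely the operator defining the S-complex homology $H_{*}(\mathcal{K}\setminus\mathcal{A})$.

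The step I expect to be the crux is verifying that this truncated operator is genuinely a differential, $\partial^{\mathcal{K}\setminus\mathcal{A}}\circ\partial^{\mathcal{K}\setminus\mathcal{A}}=0$, equivalently that deleting the $\mathcal{A}$-terms is harmless — and this is exactly where closedness enters. Fixing $c,e\in\mathcal{K}\setminus\mathcal{A}$, the coefficient of $e$ in $\partial^{\mathcal{K}\setminus\mathcal{A}}\partial^{\mathcal{K}\setminus\mathcal{A}} c$ is $\sum_{d\in\mathcal{K}\setminus\mathcal{A}}\langle\partial c,d\rangle\langle\partial d,e\rangle$. The corresponding sum over all $d\in\mathcal{K}$ vanishes because $\partial\circ\partial=0$, so it suffices to check that the omitted terms, those with $d\in\mathcal{A}$, contribute nothing: if $d\in\mathcal{A}$, then $bd\ \mathcal{A}\subset\mathcal{A}$ forces every face of $d$ into $\mathcal{A}$, whence $\langle\partial d,e\rangle=0$ for $e\notin\mathcal{A}$. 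Thus $\partial^{\mathcal{K}\setminus\mathcal{A}}$ squares to zero and coincides with the quotient differential, so the S-complex on $\mathcal{K}\setminus\mathcal{A}$ is isomorphic as a chain complex to $C_{*}(\mathcal{K})/C_{*}(\mathcal{A})$. Passing to homology gives $H_{*}(\mathcal{K}\setminus\mathcal{A})=H_{*}(\mathcal{K},\mathcal{A})$, which completes the argument.
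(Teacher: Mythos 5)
The paper does not actually prove this statement: it is imported verbatim from Mrozek--Batko~\cite{core} (their Theorem~3.1), so there is no in-paper proof to compare against. Your argument is correct and is essentially the standard one from that reference: closedness of $\mathcal{A}$ (i.e.\ $bd\,\mathcal{A}\subset\mathcal{A}$) makes $C_{*}(\mathcal{A})$ a subcomplex, the quotient $C_{*}(\mathcal{K})/C_{*}(\mathcal{A})$ is canonically the free complex on the cells of $\mathcal{K}\setminus\mathcal{A}$ equipped with the truncated boundary operator, and your check that the omitted terms with $d\in\mathcal{A}$ satisfy $\langle\partial d,e\rangle=0$ for $e\notin\mathcal{A}$ is precisely where the closedness hypothesis is consumed. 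No gaps.
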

Therefore it suffices to compute homology of a chain complex obtained by removing all elements in $\mathcal{A}$ from $\mathcal{K}$. If one is able to efficiently
 find large acyclic subcomplex $\mathcal{A}$, then this approach offers great performance gains. For instance this method has been used to speed up cubical homology
 computations in~\cite{acc} and to efficient computation of cohomology generators in~\cite{cmes}. A similar technique has been used to speed up computations of
 homology of inclusions~\cite{natalia}.

The presented strategy can be especially useful in the case of cubical data, for example 3D images. In this case we work only on top dimensional cells. This is important, as the number of faces (of all dimensions) of a given cell is exponential in its dimension. Even though this method is limited to low dimensions, the performance gains can be significant.

In this section we demonstrate how this technique can be used to speed up computations of persistent homology. To do this, we need to introduce the concept of \emph{acyclic subcomplex compatible with filtration}. Let $\mathcal{A}$ be a subcomplex of a filtered complex $\mathcal{K}_0 \subset \ldots \subset \mathcal{K}_n = \mathcal{K}$. We say that $\mathcal{A}$ is an \emph{acyclic subcomplex compatible with filtration} if $\mathcal{A}_i = \mathcal{K}_i \cap \mathcal{A}$ is an acyclic subcomplex in $\mathcal{K}_i$ for every $i \in \{0,\ldots,n\}$.

In order to obtain acyclic subset compatible with filtration, first the maximal acyclic subcomplex $\mathcal{A}_0$ is constructed in $\mathcal{K}_0$ as described for instance in~\cite{acc}\footnote{The idea of the procedure is as follow - first a top dimensional cell $A$ in $\mathcal{K}_0$ is chosen. Then all its neighbor elements in $\mathcal{K}_0$ are processed. Element $B \in \mathcal{K}_0$ is added to the acyclic subcomplex iff its intersection with the current acyclic subcomplex is acyclic. For fast tests tabulated configurations for cubes and simplices can be used.}. Then elements in $\mathcal{K}_1$ are processed to construct $\mathcal{A}_1$. The element $B \in \mathcal{K}_1$ is added to to the complex iff: 
\begin{enumerate}
\item The intersection with the acyclic complex constructed so far is acyclic and \label{pt1}
\item there are no elements in boundary of $B$ intersected with $\mathcal{K}_{0}$ that are not in acyclic subcomplex $\mathcal{A}_0$. \label{pt2}
\end{enumerate}
For the higher values of filtration we use analogous criterion. The point~(\ref{pt2}) for $g(B) = i$ should be then replaced with:
\begin{enumerate}
\item there are no elements in boundary of $B$ intersected with $\mathcal{K}_{j}$ for $j < i$ that are not in acyclic subcomplex $\mathcal{A}_{i-1}$.
\end{enumerate}

In this way we ensure that the closure of the final acyclic subcomplex intersected with every level of filtration is acyclic subcomplex at this level of filtration as shown in Theorem~\ref{th:acc}. This condition is necessary as explained in Figure~\ref{fig:acyclicSubcmplx}. This fact is used in the proof of Theorem~\ref{thm:acyclic}.

\begin{theorem}
\label{th:acc}
As a result of the presented procedure an acyclic subcomplex compatible with filtration is obtained. 
\end{theorem}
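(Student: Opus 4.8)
The plan is to prove Theorem~\ref{th:acc} by induction on the filtration level $i$, maintaining two invariants for the subcomplex $\mathcal{A}_i$ produced once level $i$ has been processed: first, that $\mathcal{A}_i$ is acyclic in $\mathcal{K}_i$, and second, that appending the level-$i$ cells does not enlarge the lower part, i.e. $\mathcal{A}_i \cap \mathcal{K}_{i-1} = \mathcal{A}_{i-1}$. The first invariant is exactly the acyclicity demanded by the definition of an acyclic subcomplex compatible with filtration; the second is what will allow me to identify the inductively built $\mathcal{A}_i$ with the intersection $\mathcal{K}_i \cap \mathcal{A}$ of the \emph{final} subcomplex $\mathcal{A}$, so that the requirements of the definition line up. The only external ingredient needed is the acyclicity-preservation lemma underlying the acyclic subspace method of~\cite{acc}: if $X$ is acyclic, $Y$ is the closure of a single cell (hence acyclic), and $X\cap Y$ is acyclic, then $X\cup Y$ is acyclic; this follows at once from the reduced Mayer--Vietoris sequence, since all three terms $X$, $Y$, $X\cap Y$ have trivial reduced homology.

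For the base case, $\mathcal{A}_0$ is by construction the maximal acyclic subcomplex of $\mathcal{K}_0$, built exactly as in~\cite{acc}, so the first invariant holds and the second is vacuous. For the inductive step I assume $\mathcal{A}_{i-1}$ is acyclic in $\mathcal{K}_{i-1}$ and process the cells $B$ with $g(B)=i$ one at a time in the algorithm's order. Such a $B$ is appended only when criterion~(\ref{pt1}) holds, that is, when the intersection of the closure of $B$ with the subcomplex built so far is acyclic. Applying the preservation lemma with $X$ the current subcomplex and $Y$ the closure of $B$ shows acyclicity survives each single insertion, so $\mathcal{A}_i$ is acyclic in $\mathcal{K}_i$. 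This is essentially the original acyclic-subspace argument and presents no new difficulty.

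The genuinely new point, which I expect to be the main obstacle, is the second invariant: without it a cell of high filtration value could, through its boundary, drag low-value cells into $\mathcal{A}$ and thereby spoil the acyclicity of a lower intersection $\mathcal{K}_j \cap \mathcal{A}$ — precisely the failure illustrated in Figure~\ref{fig:acyclicSubcmplx}. Here criterion~(\ref{pt2}) is decisive: a cell $B$ with $g(B)=i$ is added only if every element of its boundary lying in $\mathcal{K}_j$ for $j<i$ already belongs to $\mathcal{A}_{i-1}$. A face $a$ of $B$ has $g(a)\le i$, so if $g(a)<i$ then $a\in\mathcal{K}_{i-1}$ and, by~(\ref{pt2}), $a\in\mathcal{A}_{i-1}$, whereas if $g(a)=i$ then $a\notin\mathcal{K}_{i-1}$; hence the closure of each newly added $B$ meets $\mathcal{K}_{i-1}$ inside $\mathcal{A}_{i-1}$, giving $\mathcal{A}_i \cap \mathcal{K}_{i-1} = \mathcal{A}_{i-1}$. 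Iterating this identity downward yields $\mathcal{A}_i \cap \mathcal{K}_j = \mathcal{A}_j$ for every $j\le i$.

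Finally I would specialize to $i=n$, so that $\mathcal{A}=\mathcal{A}_n$ is the final subcomplex, and read off from the telescoped identity that $\mathcal{K}_j \cap \mathcal{A} = \mathcal{A}_j$ for every $j$. Combined with the first invariant, each $\mathcal{A}_j = \mathcal{K}_j \cap \mathcal{A}$ is acyclic in $\mathcal{K}_j$, which is exactly the definition of an acyclic subcomplex compatible with filtration. The one routine point to check en route is that every $\mathcal{A}_i$ is genuinely closed; this holds because insertions are performed together with closures and criterion~(\ref{pt2}) guarantees that the lower faces are already present, so I would record it explicitly but not dwell on it.
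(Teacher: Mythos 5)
Your proposal is correct and follows essentially the same route as the paper: the paper's (very terse) proof likewise derives acyclicity of each $\mathcal{A}_i$ from criterion~(\ref{pt1}) and the identity $\mathcal{K}_i \cap \mathcal{A} = \mathcal{A}_i$ from criterion~(\ref{pt2}). Your version merely fills in the details the paper leaves implicit (the level-by-level induction, the Mayer--Vietoris preservation lemma, and the telescoping of the intersection identity), which is a faithful expansion rather than a different argument.
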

\begin{proof}
Due to~(\ref{pt1}) the obtained complex is acyclic. From point~(\ref{pt2}) we have that $\mathcal{K}_i \setminus \mathcal{A} = \mathcal{K}_i \setminus \mathcal{A}_i$. Therefore we obtain acyclic subcomplex compatible with filtration.
\end{proof}

\begin{figure}[!h]
\centerline{\includegraphics[scale=0.3]{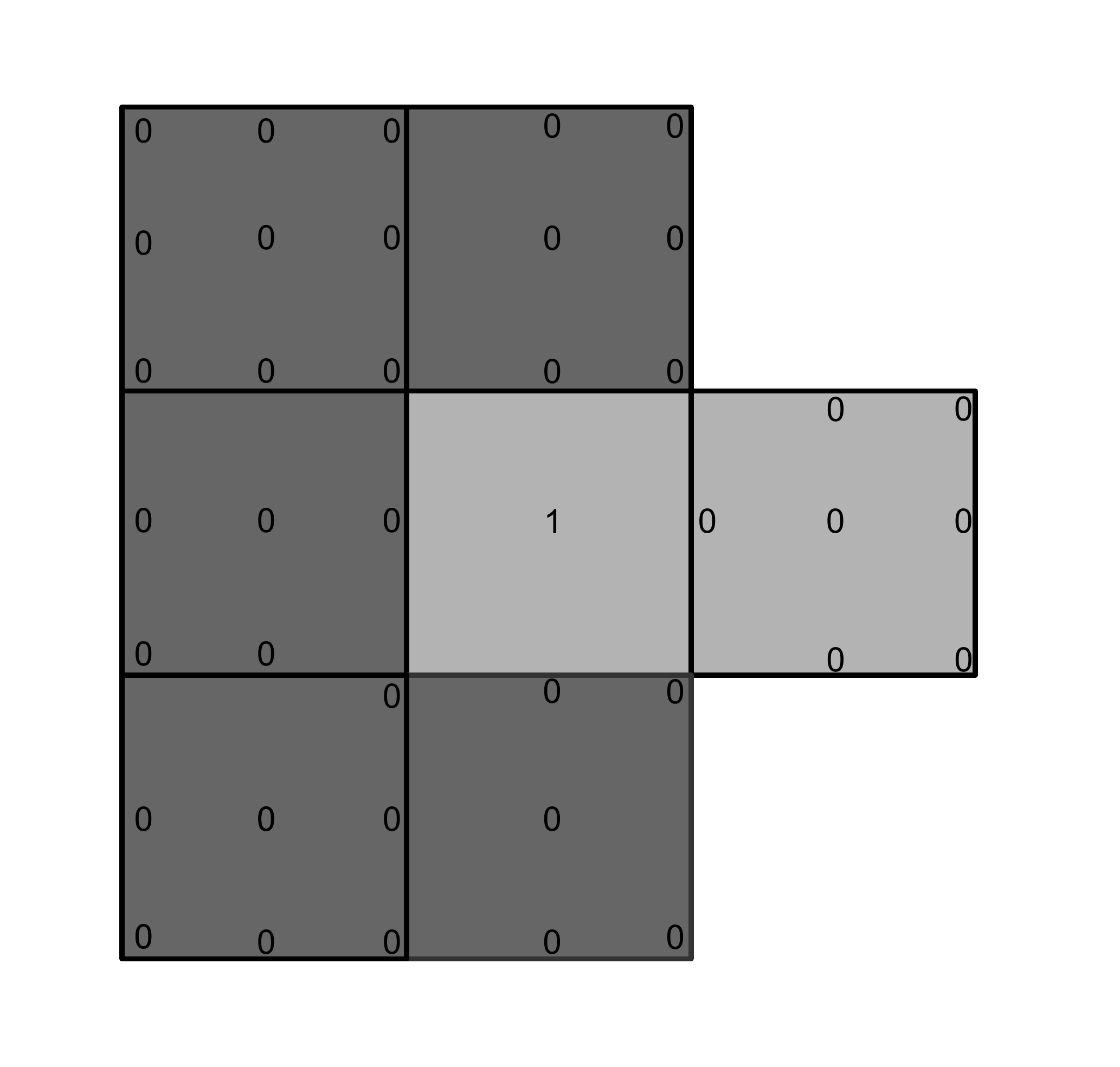}}
\caption{At the first filtration level the 2-dimensional elements with filtration $0$ (marked with darker gray) belong to the acyclic subcomplex. If, as in~\cite{acc}, we considered only intersection with acyclic subcomplex when generating the acyclic subcomplex at the filtration level $1$, the middle 2-dimensional element would be added to the acyclic subset at filtration level $1$. In the end, when the whole (closed) acyclic subspace is removed, we lose the $[0,1]$ persistence interval in dimension one. This cannot happen when the extra restrictions are imposed.
}
\label{fig:acyclicSubcmplx}
\end{figure}

Let us show that such a procedure do not change persistence in dimension greater of equal $1$.
\begin{theorem}
\label{thm:acyclic}
Let $\mathcal{K}_0 \subset \ldots \subset \mathcal{K}_n = \mathcal{K}$ be a filtered complex and let $\mathcal{A}_0 \subset \ldots \subset \mathcal{A}_n = \mathcal{A}$, such that $\mathcal{A}_i = \mathcal{K}_i \setminus \mathcal{A}$, be an acyclic subcomplex compatible with filtration. Then persistent homologies $H^p_l(\mathcal{K})$ and $H^p_l(\mathcal{K} \setminus \mathcal{A})$ are the same for $l > 0$.
\end{theorem}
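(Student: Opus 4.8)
The plan is to argue exactly as in Theorems~\ref{th:reductions} and~\ref{th:coreductions}, by building a ladder diagram relating the two filtrations and invoking the Persistence equivalence theorem (Theorem~\ref{thm:pet}). Set $X_i = \mathcal{K}_i$ and $Y_i = \mathcal{K}_i \setminus \mathcal{A}$; the horizontal arrows are the maps induced in homology by the inclusions $\mathcal{K}_i \hookrightarrow \mathcal{K}_{i+1}$ and $(\mathcal{K}_i\setminus\mathcal{A}) \hookrightarrow (\mathcal{K}_{i+1}\setminus\mathcal{A})$. First I would record the set-theoretic identity $\mathcal{K}_i \setminus \mathcal{A} = \mathcal{K}_i \setminus \mathcal{A}_i$ (which holds because $\mathcal{A}_i = \mathcal{K}_i \cap \mathcal{A}$), so that each $Y_i$ is obtained from $\mathcal{K}_i$ by removing the closed acyclic subcomplex $\mathcal{A}_i$. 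This is exactly the situation to which the excision results apply, and compatibility with the filtration (Theorem~\ref{th:acc}) is precisely what guarantees that $\mathcal{A}_i$ is both closed and acyclic at every level.

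The vertical maps $\phi_i \colon H_l(\mathcal{K}_i) \to H_l(\mathcal{K}_i \setminus \mathcal{A})$ are defined for $l>0$ as the composite of the isomorphism $H_l(\mathcal{K}_i) \xrightarrow{\cong} H_l(\mathcal{K}_i, \mathcal{A}_i)$ coming from the long exact sequence of the pair (using $H_l(\mathcal{A}_i)=0$ for $l>0$, i.e. the excision property quoted at the start of this section), followed by the isomorphism $H_l(\mathcal{K}_i, \mathcal{A}_i) \xrightarrow{\cong} H_l(\mathcal{K}_i \setminus \mathcal{A}_i)$ supplied by Theorem~\ref{th:MMBB31}. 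Hence each $\phi_i$ is an isomorphism for $l>0$. The one point deserving care is dimension $l=1$, where the connecting map reaches $H_0(\mathcal{A}_i)$: here I must use that $\mathcal{A}_i$ meets every connected component of $\mathcal{K}_i$ in exactly one acyclic piece (so that $H_0(\mathcal{A}_i)\to H_0(\mathcal{K}_i)$ is an isomorphism), which is exactly the hypothesis $H_0(\mathcal{A})=\mathbb{Z}^{n}$ in the excision statement. This is why the conclusion is restricted to $l>0$.

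For commutativity it is cleanest to read $\phi_i$ at the chain level. Since $\mathcal{A}_i$ is closed, Theorem~\ref{th:MMBB31} identifies $C_*(\mathcal{K}_i)/C_*(\mathcal{A}_i)$ with $C_*(\mathcal{K}_i\setminus\mathcal{A})$ via the basis of cells not lying in $\mathcal{A}$, so $\phi_i$ is induced by the projection that simply deletes from a chain all generators belonging to $\mathcal{A}$. With this description the argument is identical to the previous proofs: for a cycle $c \in C_l(\mathcal{K}_i)$, going right-then-down yields $c$ with its $\mathcal{A}$-generators deleted in $C_l(\mathcal{K}_{i+1}\setminus\mathcal{A})$, while going down-then-right yields $c$ with its $\mathcal{A}_i$-generators deleted and then included. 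Because every generator appearing in $c$ already lies in $\mathcal{K}_i$, deleting $\mathcal{A}$-generators and deleting $\mathcal{A}_i$-generators coincide, so both routes agree and the square commutes.

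The main obstacle I anticipate is not the commutativity, which becomes formal once the chain-level picture is fixed, but rather pinning down the two excision isomorphisms precisely and confirming that they are natural with respect to the inclusion-induced maps across filtration levels --- in particular the dimension-one bookkeeping of connected components described above. Once the $\phi_i$ are verified to be isomorphisms for $l>0$ and the squares are shown to commute, Theorem~\ref{thm:pet} immediately gives $H^p_l(\mathcal{K}) = H^p_l(\mathcal{K}\setminus\mathcal{A})$ for all $l>0$, completing the proof.
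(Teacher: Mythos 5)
Your proposal is correct and follows essentially the same route as the paper: the same ladder of inclusion-induced maps, vertical isomorphisms obtained from the long exact sequence of the pair $(\mathcal{K}_l,\mathcal{A}_l)$ composed with Theorem~\ref{th:MMBB31}, commutativity checked at the chain level by deleting generators lying in $\mathcal{A}$, and a final appeal to Theorem~\ref{thm:pet}. If anything, your treatment of the $l=1$ case (tracking the connecting map into $H_0(\mathcal{A}_i)$ and using that $H_0(\mathcal{A}_i)\to H_0(\mathcal{K}_i)$ is injective) is more explicit than the paper's, which passes over this point silently.
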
 
\begin{proof}
Again let us write the following diagram:
\[
\begin{CD} 
  \ldots @>H(i_{l-1})>> H_{*}(\mathcal{K}_l) @>H(i_{l})>> H_{*}(\mathcal{K}_{l+1}) @>H(i_{l+1})>> \ldots \\
  @VVV@VV H(j_{l}) V@VV H(j_{l+1}) V @VVV\\
  \ldots @>H(k_{l-1})>> H_{*}(\mathcal{K}_l \setminus \mathcal{A}_l) @>H(k_{l})>> H_{*}(\mathcal{K}_{l+1} \setminus \mathcal{A}_{l+1}) @>H(k_{l+1})>> \ldots\\
\end{CD}
\]

The horizontal arrows are induced by inclusion. 
While it is straightforward for the upper sequence, the lower one require some explanation, since in general both $\mathcal{K}_l \subset \mathcal{K}_{l+1}$ and $\mathcal{A}_l \subset \mathcal{A}_{l+1}$. The $k_l$ is an inclusion, since $\mathcal{A}_{l+1} \setminus \mathcal{A}_l \subset \mathcal{K}_{l+1} \setminus \mathcal{K}_l$.

From  the exact sequence of a pair $(\mathcal{K}_l , \mathcal{A}_l)$:
\[ \rightarrow H_n(\mathcal{A}_l) \rightarrow H_n(\mathcal{K}_l) \rightarrow H_n( \mathcal{K}_l , \mathcal{A}_l ) \rightarrow \]
since $H_n(\mathcal{A}_l) = 0$ for $n > 0$ we have that $H_n(\mathcal{K}_l)$ is isomorphic to $H_n( \mathcal{K}_l , \mathcal{A}_l )$. From Theorem~\ref{th:MMBB31} we have that $H_n(\mathcal{K}_l , \mathcal{A}_l )$ is isomorphic to $H_n( \mathcal{K}_l \setminus \mathcal{A}_l )$. Therefore all the vertical arrows are isomorphisms for $n>0$. Again, as in Theorem~\ref{th:reductions} one can show that all the squares commute since $\mathcal{A}_{l+1} \setminus \mathcal{A}_l \subset \mathcal{K}_{l+1} \setminus \mathcal{K}_l$. Consequently, from Theorem~\ref{thm:pet}, the persistence modules corresponding to lower and upper complexes are equal for $n>0$.
\end{proof}
As for the zero dimensional persistence here exactly the same situation as the one presented in Figure~\ref{fig:zeroDimIsLost} occurs. Therefore all the information about zero dimensional persistence is lost. They can be retrieved in near linear time by using find union data structure.

Algorithm~\ref{alg:acc} summarizes the acyclic subspace method for persistence. In this algorithm only the top dimensional elements need to be represented.
\begin{algorithm}[!ht]
  \small
  \caption{Acyclic subspace algorithm for persistence.}
  \label{alg:acc}
  \begin{algorithmic}
	\REQUIRE $\mathcal{K}$ - cell complex with filtration $g : \mathcal{K} \rightarrow \mathbb{Z}$;
	\ENSURE $\mathcal{A}$ - acyclic subcomplex of $\mathcal{K}$;
	\STATE $min = $ minimal value of $g$ on $\mathcal{K}$;
	\STATE $\mathcal{A} = \emptyset$, the acyclic subcomplex;
	\FOR {every connected component $\mathcal{C}$ of $\mathcal{K}$}
		\STATE Pick any top dimensional cell $T \in \mathcal{C}$ having minimal filtration value in its connected component. Set $\mathcal{A} = \mathcal{A} \cup \{T\}$;
	\ENDFOR
	\FOR {every filtration value $i$ starting from $min$ in increasing order}
		\STATE List of cells \texttt{Q} $= \emptyset$;
		\FOR {every $T$ such that $g(T) = i$}
			\IF {$T \cap \mathcal{A}$ is acyclic and for every element $a \in T \setminus (T \cap \mathcal{A})$, $g(a) = i$}
				\STATE \texttt{Q} $=$ \texttt{Q} $\cup\ T$;
			\ENDIF
		\ENDFOR
		\WHILE { \texttt{Q} is not empty }
			\STATE $T\ =\ $\texttt{Q}$.dequeue()$;
			\IF {$T \cap \mathcal{A}$ is acyclic}
				\STATE $\mathcal{A} = \mathcal{A} \cup T$;
				\FOR { every cell $T'$ incident to $T$ such that $g(T') = i$ and $T' \not \in \mathcal{A}$ }
					\IF { $T' \cap \mathcal{A}$ is acyclic and for every element $a \in T' \setminus (T' \cap \mathcal{A})$, $g(a) = i$ }
						\STATE \texttt{Q} $=$ \texttt{Q} $\cup\ T'$;
					\ENDIF
				\ENDFOR
			\ENDIF
		\ENDWHILE
	\ENDFOR
  \end{algorithmic}
\end{algorithm} 

The complexity of this algorithm is linear provided the cells of the complex and the neighboring cells can be iterated according to the filtration levels of the function $g$.

\section{Smoothing up the data}
\label{sec:smoothing}
Let us fix a complex $\mathcal{K}$ and a filtering function $f$. The number of elements reduced with described algorithms strongly depends on the filtering function $f$. It may happen, especially for noisy data that no reduction can be made, because the value of cell and some of its faces differs infinitesimally. In this appendix we show a heuristic for \emph{denoising} such data, controlling the changes of persistence. This way we increase the effectiveness of the presented reduction methods. This idea is based on stability results for persistence~\cite{stability}. As recalled in Equation~\ref{eq:stability}, stability of persistence states that under the $\epsilon$ change (in the maximum norm) of the filtering function the persistence diagrams will change by no more than $\epsilon$ in the so-called bottleneck distance~\cite{stability}. 

We start with free face collapses and coreductions. One can view the denoising procedure as constructing a perturbed function $f' : \mathcal{K} \rightarrow \mathbb{Z}$ (we assume that $f' = f$ on all cells in which the perturbation did not take place). In this case when performing free face collapsing or coreductions the requirement $g(A) = g(b)$ can be relaxed by checking:
\begin{enumerate}
\item If $f(A) - f(b) < \epsilon$ and 
\item if for every $B$ being a coface of $b$ we have $f(B) \geq f(A)$
\end{enumerate}
then the filtration value of the cell $b$ is perturbed to $f'(b) = f(A)$ and a reduction of a pair $(A,b)$ can be made when a complex $\mathcal{K}$ with a filtering function $f'$ is considered. 

Let us show that $f'$ is a sublevel-complex filtration and $\lVert f-f' \rVert_\infty = max_{a \in \mathcal{K}} | f(a) - f'(a) | \leq \epsilon$. The fact that $f'$ is a filtration of $\mathcal{K}$ follows from point (2) above and the fact that if a filtration value of a coface of $b$, such that $f(b) \neq f'(b)$, is changed then they are increased. The fact that for every $a \in \mathcal{K}$, $| f(a) - f'(a) | \leq \epsilon$ follows from the condition (1). Therefore, from stability of persistence~\cite{stability} we have $d_B(H^p(\mathcal{K},f), H^p(\mathcal{K},f')) \leq \epsilon$.

It is known from Discrete Morse theory~\cite{leviner} that maximizing the number of reduced elements is NP-complete. Therefore a greedy strategy seems to be a viable option. One can perform a greedy reduction together with changing the value of the function if the conditions presented above are satisfied.  We want to stress that once we want to change the value of a cell $a \in \mathcal{K}$ from $f(a)$ to $f'(a)$, then the inequality $f(B) \geq f'(a)$, for every $B$ coface of $a$, need to be checked in the initial complex (so we are not allowed to disregard the elements that has already been reduced). Otherwise the changes will accumulate and the $\epsilon$ precision will be lost as presented in Figure~\ref{fig:toleranceStability}.

\begin{figure}[!h]
\centerline{\includegraphics[scale=0.5]{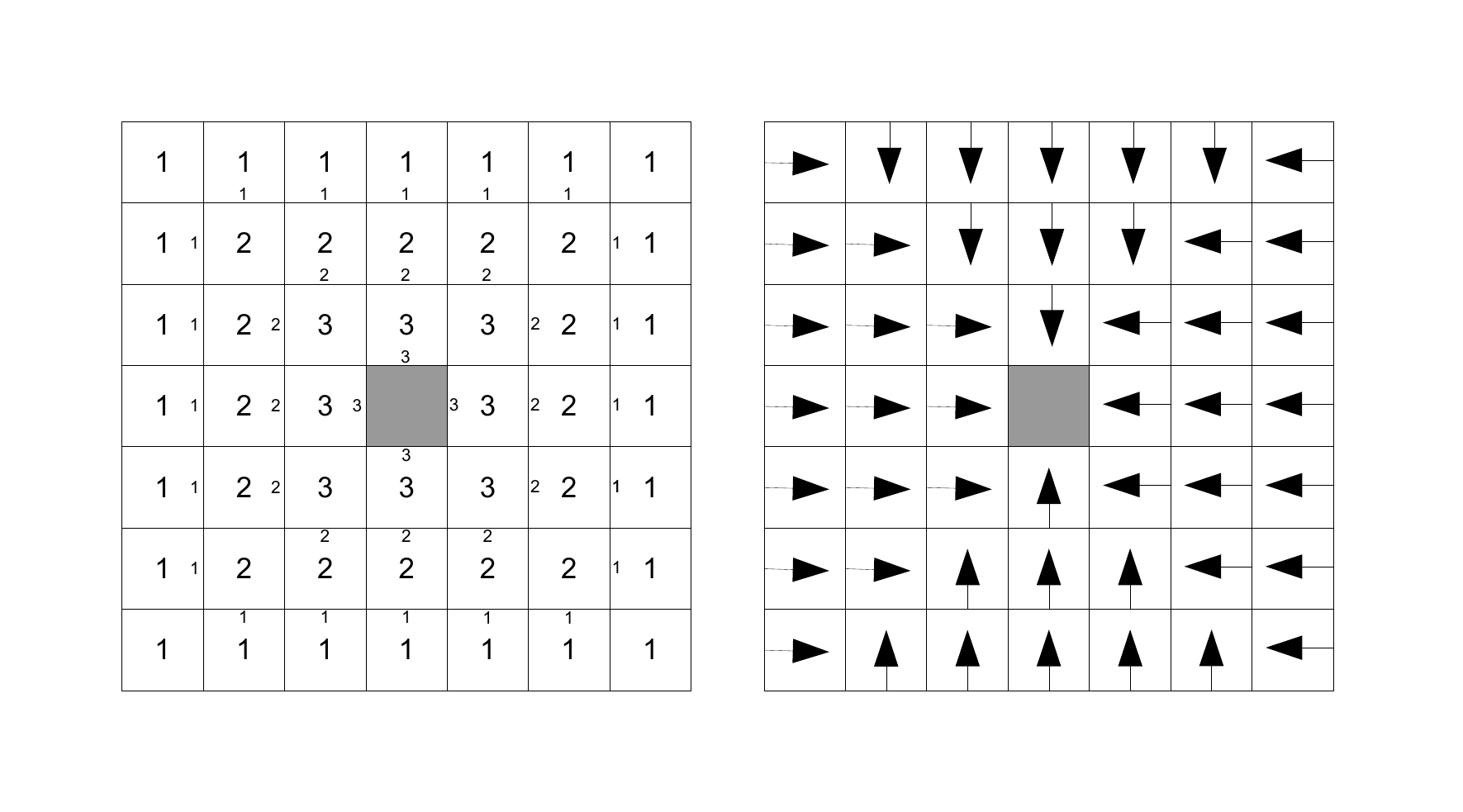}}
\caption{Example showing that when changing the filtering function values after some reductions were made, one should always check the presented conditions on the initial complex, not the reduced one. Otherwise the changes could accumulate and one would lose the bounds on the distances of the output persistence, as presented in this example. On the left the initial complex with the initial function values. Filtration is given on top dimensional cells and lower dimensional cell has the filtration equal to minimum of filtration of incident 2-dimensional cells. Suppose we want to obtain persistence with a tolerance $\epsilon = 1$. Then, if the reduced elements are forgotten and they are not taken care of in the checking $f(B) \geq f'(b)$ for $b$ being a face of $B$, then all the reductions presented on the right can be made which clearly gives an interval $[3,\infty]$ in the dimension $1$. The correct one, $[1,\infty]$ is farther than $\epsilon$ from the provided answer. By enlarging this example the difference can be made arbitrarily large.}
\label{fig:toleranceStability}
\end{figure}

Similar idea can be applied to acyclic subcomplex method for persistence. Let us assume that the filtration $f$ is given only on top dimensional cells of a complex $\mathcal{K}$. Then $f$ is assumed to be extended to other cells by using lower star filtration (i.e. every cell have a filtration equal to the minimal filtration of the incident top dimensional cells). %This correspond to using a formula:
%\[f(a) = min\{ f(A) \text{for } A \text{ being maixmal cell incidental to }a \}\]
Once one perturbs the values of top dimensional cells ($f'$ is the perturbed function) so that for each $A \in \mathcal{K}$ being a top dimensional cell $|f(A)-f'(A)|\leq \epsilon$, and constructed lower star filtration based on $f'$ for every $a \in \mathcal{K}$ we will have $|f(a) - f'(a)| \leq \epsilon$. Consequently $d_B(H^p(\mathcal{K},f), H^p(\mathcal{K},f')) \leq \epsilon$. Due to this property one can do two things to increase the efficiency of acyclic subcomplex method:
\begin{enumerate}
\item Reduce the number of filtration levels of $f'$ with respect to $f$ as much as possible. This allows a better complexity of the algorithm (we remind that the complexity of this algorithm is dependent to the number of filtration levels).
\item Use greedy strategy when constricting the maximal acyclic subcomplex by changing the value of neighboring cells used to construct locally larger acyclic subcomplex.
\end{enumerate}
In both cases we face a hard optimization problem. Therefore, the only feasible solution would be to use a greedy strategy. %The details are left for the reader. 

\section{Conclusions.}
In this paper, we adopted existing reduction techniques, which were beneficial in homology computation, to the context of persistent homology. In many practical cases such a reductions should be used as a preprocessing step to avoid storing and processing the entire boundary matrix of the initial data. As in the case of standard homology computation one can use the presented reductions in a sequence -- acyclic subspace, then elementary collapses and coreductions. 

\end{document}